\numberwithin{equation}{section}
\newtheorem{lemma}[equation]{Lemma}
\newtheorem{theoremst}[equation]{Theorem$^{\star}$}
\newtheorem{propositionst}[equation]{Proposition$^{\star}$}
\newtheorem{corollaryst}[equation]{Corollary$^{\star}$}
\newtheorem{lemmast}[equation]{Lemma$^{\star}$}
\theoremstyle{definition}
\newtheorem{rmk}[equation]{Remark}
\newtheorem{eg}[equation]{Example}
\newtheorem{defn}[equation]{Definition}
\newcommand{\bP}{\mathbf{P}}
\newcommand{\bS}{\mathbf{S}}
\newcommand{\bV}{\mathbf{V}}
\newcommand{\bk}{\mathbf{k}}
\newcommand{\cI}{\mathcal{I}}
\newcommand{\arxiv}[1]{{\tt \href{http://arxiv.org/abs/#1}{{arXiv:#1}}}}
\DeclareMathOperator{\Sym}{Sym}
\DeclareMathOperator{\sgn}{sgn}
\newcommand{\GL}{\mathbf{GL}}
\newcommand{\SL}{\mathbf{SL}}
\title[Equations for the fifth secant variety of Segre products]{Equations for the fifth secant variety\\ of Segre products of projective spaces}
\author{Luke Oeding} 
\address{Department of Mathematics, Auburn University, Auburn, AL}
\email{\href{mailto:oeding@auburn.edu}{oeding@auburn.edu}}
\urladdr{\url{http://www.auburn.edu/~lao0004/}}
\author{Steven V Sam}
\address{Department of Mathematics, University of California, Berkeley, CA}
\email{\href{mailto:svs@math.berkeley.edu}{svs@math.berkeley.edu}}
\urladdr{\url{http://math.berkeley.edu/~svs/}}
\subjclass[2010]{%
14M07, 
14Q15, 
15A69.
}
\date{March 25, 2015}
\begin{document}

\maketitle

\vspace{-.4in}

\begin{abstract}
We describe a computational proof that the fifth secant variety of the Segre product of five copies of the projective line is a codimension $2$ complete intersection of equations of degree $6$ and $16$. Our computations rely on pseudo-randomness, and numerical accuracy, so parts of our proof are only valid ``with high probability''. 
\end{abstract}

\section{Introduction}
Secant varieties have received growing attention in recent times, largely because of the fact that they provide a geometric model relevant to a wide variety of applications.  The purpose of this note is to provide a case study in finding equations of secant varieties. For an introduction to secant varieties and their applications we invite the reader to consult \cite{CGO} and the vast collection of references therein. The 5 factor binary secant variety is particularly interesting in light of recent results of Bocci and Chiantini \cite{BC}, that $2 \times 2 \times 2 \times 2 \times 2$ tensors are not identifiable in rank 5, but the generic tensor of that format has exactly 2 decompositions.   For $\geq 6$ factors, the binary Segre product is known to be $k$-identifiable for most of the possible values of $k$ below the generic rank, see \cite{BCO} and \cite{COV2014}.

Fix a field $\bk$ of characteristic $0$. For $i =1,\dots, 5$, let $V_i$ be a $2$-dimensional vector space over $\bk$. Let $\bV = \bigotimes_{i=1}^5 V_i$.
Let $X$ be the $5$th secant variety of $\prod_{i=1}^5 \bP(V_i)$ inside of $\bP(\bV)$ (by the Segre embedding). The goal of this note is to prove\footnote{Our methods include  probabilistic symbolic computations and numerical computations. Though they have been carefully tested and produce completely reproducible results, they are technically only true with high probability, or up to the numerical precision of the computers we use. To indicate reliance on such computations, we designate those theorems, corollaries, and propositions with a star.}
the following statement.

\begin{theoremst} \label{thm:CI}
The affine cone of $X$ is a complete intersection of two equations: one of degree $6$, and one of degree $16$.
\end{theoremst}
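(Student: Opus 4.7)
My approach has three phases: locate candidate equations $f_6$ and $f_{16}$ by representation theory, show they define a complete intersection of codimension $2$ in $\bP(\bV)$, and verify that this complete intersection equals $X$.

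For the first phase, the ideal $I(X)$ is invariant under $G = \bigl(\prod_{i=1}^5 \GL(V_i)\bigr) \rtimes \mathfrak{S}_5$, so it decomposes as a direct sum of $G$-isotypic components. In each degree $d$ I would enumerate the irreducible $G$-summands of $\Sym^d(\bV^*)$ and, for each candidate, pseudorandomly sample a generic rank-five tensor $T = \sum_{i=1}^5 v_i^{(1)} \otimes \cdots \otimes v_i^{(5)}$ and test whether the $G$-equivariant projection of $T$ to that summand vanishes. A candidate module that consistently evaluates to zero gives, with high probability, a submodule of $I(X)$. For $d = 6$ this should produce a single new equation up to scalar, giving $f_6$; for $d = 16$, after accounting for everything already forced by $f_6$ in lower degrees (via a Hilbert-series comparison against the conjectured $(1-t^6)(1-t^{16})/(1-t)^{32}$), I expect exactly one further new equation, giving $f_{16}$.

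For the second phase, I would verify $f_{16} \notin (f_6)$ so that $V(f_6, f_{16})$ has codimension exactly $2$, forming a complete intersection of degree $6 \cdot 16 = 96$ in $\bP(\bV)$. For the third phase, I would argue that $V(f_6, f_{16}) = X$ by intersecting with a generic linear subspace $L \subset \bP(\bV)$ of dimension $2$ (complementary to $X$): a homotopy-continuation routine yields the $96$ isolated points of $V(f_6,f_{16}) \cap L$, and if each can be numerically decomposed as a sum of five rank-one tensors, then all of these points lie on $X$. This forces $\deg X \geq 96$, and combined with $X \subseteq V(f_6, f_{16})$ of degree $96$ yields $X = V(f_6, f_{16})$ as cycles. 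A Jacobian rank check at one smooth intersection point then certifies reducedness.

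The principal obstacle is this last certification. Direct symbolic Gr\"obner basis computation in $32$ variables with generators of degree up to $16$ is well out of reach, so the claim that $V(f_6, f_{16})$ has no extraneous components beyond $X$ must rest on the completeness of the numerical path tracker and on the accuracy of the rank-five decompositions found at each sampled point; this is precisely the origin of the ``high probability'' caveat in the starred theorem. A secondary difficulty is the representation-theoretic search in degree $16$, where the relevant Schur modules are very large and the equivariant projections must almost certainly be handled by reduction modulo a prime (or by pseudorandom numerical specialization) rather than by exact arithmetic over $\mathbb{Q}$.
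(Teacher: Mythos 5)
Your overall strategy is the paper's: exhibit $f_6$ and $f_{16}$, note that $Y=V(f_6,f_{16})$ is a codimension-$2$ complete intersection of degree $96$, and compare degrees with $X$ to force $X=Y$. But your final step has a genuine gap. Establishing that $X=Y$ \emph{as cycles} only tells you that $Y$ is irreducible and \emph{generically} reduced, and a Jacobian rank check at one smooth point certifies exactly the same thing (reducedness at that point, hence along a dense open set). Neither rules out $Y$ being non-reduced along a proper closed subset, and the theorem asserts an equality of ideals, $\cI(X)=(f_6,f_{16})$, so you must show $(f_6,f_{16})$ is radical everywhere. The missing ingredient is that a complete intersection is Cohen--Macaulay, hence has no embedded primes, so generically reduced implies reduced (Serre's criterion: $R_0$ plus $S_1$). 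With that one observation your argument closes; without it you have only shown $Y_{\mathrm{red}}=X$.

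Two smaller points of comparison with the paper. First, you propose to obtain the degree bound by numerically solving $V(f_6,f_{16})\cap L$ and then certifying that each of the $96$ solutions admits a rank-$5$ decomposition. This is logically sound but practically backwards: $f_{16}$ is a degree-$16$ semi-invariant in $32$ variables whose mere \emph{evaluation} at a single point took the authors up to $2.3\times 10^4$ seconds, so path-tracking a degree-$96$ system built from it is not realistic. The paper instead slices $X$ directly, using its low-degree parametrization as a sum of five rank-one tensors, and counts $96$ points in $X\cap\bP^2$; this gives $\deg X\ge 96$ without ever solving a system involving $f_{16}$, and the inclusion $X\subseteq Y$ then does the rest. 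Second, your Hilbert-series bookkeeping in degree $16$ matches the paper's in spirit, though they account for the old equations concretely as $f_6\cdot U_{10}^{\Sigma_5,\sgn}$ inside the $\Sigma_5$-invariants of degree $16$.
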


We speculate that the homogeneous coordinate ring of any secant variety of any Segre product of projective spaces is Cohen--Macaulay. Theorem~\ref{thm:CI} confirms this for $X$. Using flattening and inheritance \cite[Ch.~7]{LandsbergTensorBook}, we get the following corollary:

\begin{corollaryst} \label{cor:eqns}
Suppose $n\geq 5$. Let $V_{1},\dots, V_{n}$ be vector spaces, let $f_{6}$ and $f_{16}$ denote minimal generators of $\sigma_{5}((\bP^{1})^{\times 5})$ and let $F_{6}$ and $F_{16}$, respectively, denote the natural liftings of $f_{6}$ and $f_{16}$ to $\bk[V_{1}\otimes \dots\otimes V_{n}]$. Let $X$ be the fifth secant variety of 
\[
\bP(V_1) \times \cdots \times \bP(V_n) \subset \bP(V_1 \otimes \cdots \otimes V_n).
\]
Then the linear span of the  $\GL(V_1) \times \cdots\times \GL(V_n) \rtimes \Sigma_{n}$-orbits of $F_{6}$ and $F_{16}$ are equations that vanish on $X$.
\end{corollaryst}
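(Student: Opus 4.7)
The plan is to produce the natural lifts $F_6$ and $F_{16}$ explicitly as pullbacks of $f_6$ and $f_{16}$ along a composition of flattening and projection, verify directly that these polynomials vanish on $X$, and then invoke $\GL(V_1) \times \cdots \times \GL(V_n) \rtimes \Sigma_n$-invariance of $X$ to conclude that the entire linear span of the orbit also vanishes. Everything reduces to a rank bound followed by a single appeal to Theorem~\ref{thm:CI}.

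For the construction, fix any partition $\{1,\ldots,n\} = I_1 \sqcup \cdots \sqcup I_5$ and set $U_j := \bigotimes_{i \in I_j} V_i$, so $\bV \cong U_1 \otimes \cdots \otimes U_5$. Choose $2$-dimensional subspaces $W_j \subseteq U_j$ with linear projections $\pi_j\colon U_j \twoheadrightarrow W_j$, and identify $W_j$ with $\bk^2$. For $f$ either $f_6$ or $f_{16}$, the natural lift is
\[
F \;:=\; f \circ (\pi_1 \otimes \cdots \otimes \pi_5),
\]
viewed as a polynomial on $\bV$ via the flattening identification. To show $F$ vanishes on $X$, by Zariski closure it suffices to consider a rank-$5$ tensor $T = \sum_{\ell=1}^5 v_1^{(\ell)} \otimes \cdots \otimes v_n^{(\ell)}$. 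Setting $u_j^{(\ell)} := \bigotimes_{i \in I_j} v_i^{(\ell)} \in U_j$, the flattened form is $T = \sum_{\ell=1}^5 u_1^{(\ell)} \otimes \cdots \otimes u_5^{(\ell)}$, and
\[
(\pi_1 \otimes \cdots \otimes \pi_5)(T) \;=\; \sum_{\ell=1}^5 \pi_1(u_1^{(\ell)}) \otimes \cdots \otimes \pi_5(u_5^{(\ell)})
\]
is a sum of five decomposable tensors in $W_1 \otimes \cdots \otimes W_5$, hence a point of $\sigma_5(\mathrm{Seg}(\bP(W_1) \times \cdots \times \bP(W_5)))$. By Theorem~\ref{thm:CI}, $f$ vanishes there, so $F(T) = 0$.

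Since $X$ is preserved by the $\GL(V_1) \times \cdots \times \GL(V_n) \rtimes \Sigma_n$-action, every translate of $F$ vanishes on $X$, and so does the linear span of the orbit. The only substantive step is the observation that $(\pi_1 \otimes \cdots \otimes \pi_5)(T)$ retains rank at most five — a five-term decomposition of $T$ flattens and projects term by term — after which Theorem~\ref{thm:CI} does all the work. Different choices of partition, subspaces $W_j$, or projections $\pi_j$ yield a priori different lifts, but each produces vanishing equations whose $\GL \rtimes \Sigma_n$-orbit suffices for the corollary.
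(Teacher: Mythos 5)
Your argument is correct and is essentially the same flattening-plus-inheritance argument that the paper invokes by citation (to \cite[Ch.~7]{LandsbergTensorBook}) without writing out: group the $n$ factors into five blocks, project each block onto a $2$-dimensional quotient so that rank does not increase, and apply the vanishing of $f_6$ and $f_{16}$ on $\sigma_5((\bP^1)^{\times 5})$, then use group-invariance of $X$ to pass to the orbit span. The only cosmetic remark is that you need only the vanishing statement of Proposition~\ref{prop:eqns} rather than the full complete-intersection claim of Theorem~\ref{thm:CI}.
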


A \emph{flattening} of a tensor $A \in \bV$ is a matrix constructed by viewing $A$ as a linear mapping from the dual of one subset of the 5 vector spaces to the complementary subset. The basic fact is that if $A$ has rank $r$, then a flattening $F(A)$ has rank $\le r$. So when non-trivial, 
the $(r+1) \times (r+1)$ minors of flattenings provide equations of secant varieties. In our case the only possible sizes of flattenings are (up to transpose)  $2\times 16$, or $4\times 8$, with maximum ranks $2$ and $4$ respectively, so they do not provide non-trivial equations for tensors of rank $5$.

A next source for equations of secant varieties are exterior (or Koszul) flattenings (see, for instance, \cite{CEO}) or, more generally, Young flattenings \cite{LanOtt11_Equations}.  For basic background, we invite the reader to consult \cite{LandsbergTensorBook}.  The basic idea of Young flattenings is to consider cases when the tensor product of a Schur module $\bS_{\mu}\bV:= \bS_{\mu_{1}} V_{1} \otimes \bS_{\mu_2}V_{2} \otimes \bS_{\mu_{3}}V_{3} \otimes \bS_{\mu_{4}}V_{4} \otimes \bS_{\mu_{5}}V_{5}$ with $\bV$ contains another Schur module $\bS_{\nu}\bV$, i.e., we have a linear map
\[
F_{\mu, \nu}\colon \bS_{\mu}\bV \to \bS_{\nu}\bV
\]
depending linearly on $A\in \bV$. Subadditivity of matrix rank implies that if $F_{\mu,\nu}(A)$ has rank $p$ when $A$ has rank $1$ then $F_{\mu,\nu}(A)$  has rank at most $r\cdot p$ when $A$ has rank $r$. The art in this approach is to find good pairs of multi-partitions so that the dimensions of $\bS_{\mu}\bV$ and $\bS_{\nu}\bV$ are large with respect to $p$, so that the Young flattening has a chance to detect high rank tensors. In principle it is possible, but tedious, to list all possible Young flattenings and check which have a chance to provide meaningful equations.  Because $V_{i}$ are all $2$-dimensional, there are not too many choices for $\mu$ and $\nu$, however our initial tries at finding Young flattenings that give non-trivial equations for the 5th secant variety were unsuccessful, and it seems that Young flattenings do not provide equations for this secant variety.

After looking for Young flattenings unsuccessfully, we attempted a systematic search for equations via interpolation informed by representation theory. Our approach relied on computer calculations, which we explain in \S\ref{sec:eqn}. The search for the equation of degree $6$ is rigorous, but the search for the degree $16$ equation is only correct up to high probability since we only show that it vanishes on sufficiently many pseudo-random points. Our search for equations was guided by our guess that this variety, having low codimension, would be defined by just a few equations, and because of the large symmetry group, that these equations would be semi-invariants.  Our guesses are validated in \S\ref{sec:proof}, where we use these equations and one additional computer calculation on the degree of $X$ (which is also only valid up to high probability) to deduce Theorem~\ref{thm:CI}. In \S\ref{sec:complement} we provide a more detailed version of Corollary~\ref{cor:eqns} from a $\Delta$-module (in the sense of \cite{delta-mod}) point of view.

In particular, the equations that we find provide modules of equations for all other $5$th secant varieties of Segre products, both when the dimensions of the factors increase (by ``inheritance'') and when the number of factors increase (by flattening). As far as we know these equations do not come from any known construction (such as Young flattenings), so they provide interesting new classes of equations for secant varieties.

\subsection*{Acknowledgements}
We thank Bernd Sturmfels for suggesting this problem, J.M. Landsberg for helpful discussions, and Jon Hauenstein for providing {\tt Bertini} help. The software {\tt Bertini} \cite{BertiniSoftware}, {\tt Macaulay2} \cite{M2} and {\tt Maple} were helpful for this work.
Both authors acknowledge the hospitality of the Simons Institute for the Theory of Computing in Berkeley where this work was carried out.
SS was supported by a Miller research fellowship.

\section{A search for equations guided by symmetry} \label{sec:eqn}

\subsection{General idea} \label{sec:eqn-gen}
The variety $X$ that we are studying has low codimension so we expect its ideal to be cut out by few equations. In addition, the defining ideal of $X$ has a large symmetry group. If one polynomial is in the ideal of $X$, then so is the entire vector space of polynomials in the span of its orbit. So we expect $X$ to be cut out by a small number of semi-invariant polynomials.
In this section we describe how we use all available symmetry to cut down our search for equations.

Choose a basis $e_0, e_1$ for $V_i$ so that we can identify the coordinates of $\bP(\bV)$ with $x_I$ where $I \in \{0,1\}^5$. Let $R$ denote the polynomial ring $\Sym(\bV) \cong \bk[x_{I} \mid I \in \{0,1\}^{5} ]$ and let $\mathcal I:=\mathcal I(X)$ be the ideal of equations vanishing on $X$. Since $R$ is graded, $R = \bigoplus_{d}R_{d}$, and we can compute $\mathcal{I}_{d}\subset R_{d}$ for each $d$.  The most naive approach to determining $\mathcal I_{d}$ is to evaluate a basis of $R_{d}$ on $\dim R_{d}$ points of $X$ using the parametrization of $X$, store the results in a matrix $M$ and compute the kernel $\ker(M)= \mathcal{I}_{d}$. In practice we use pseudo-random points on $X$ with rational coefficients. 

In exact arithmetic, non-vanishing is a certainty, but vanishing might yield a false-positive. 
So this test gives an upper bound for $\dim \mathcal{I}_{d}$ and a probabilistic lower bound.  The confidence in the lower bound may be increased by evaluating on more points of $X$. Alternatively, one can work over a function field over $\bk$ of large enough transcendence degree (i.e., use parametrized points) where vanishing yields a genuine equation. The downside is that such computations are more expensive. This approach only works for small values of $d$ as the dimension of $R_{d}$ grows quickly. In particular, one can use it for $d=6$ (the lowest possible degree in which the equations of $\sigma_{5}$ can occur, a basic fact from the theory of prolongation \cite[Corollary 3.4]{LM04}) to discover the equation $f_6$ in \S\ref{app:f6}, but it will not work for $d=16$ (the largest degree we tested).

However, $R_d$ has an action of the group $\SL_2^{\times 5}$ and we suspected that $X$ is  defined by invariants of $\SL_2^{\times 5}$. This gives a much smaller space of functions to search. Set $U_d = (\Sym^d \bV)^{\SL_2^{\times 5}}$ and $T_{d} = \bV^{\otimes d}$.
For each even degree $d=2m$ the space of $\SL_2^{\times 5}$-invariants in $T_{d}$ has a basis consisting of quintuples of Young tableaux each of shape $(m,m)$ (there are no invariants in odd degree). A quintuple of tableau can be interpreted as a function on $\bV$ by applying the Young symmetrizer associated to the quintuple of tableaux. Since $R_{d}$ is a quotient of $T_d$, it is spanned by linear combinations of quintuples of Young tableaux which now satisfy certain linear dependencies.

An explicit basis of $U_{d} \subset R_d$ may be found without explicit knowledge of all of the relations as follows. We can verify that a given set of quintuples of tableaux are linearly independent by evaluating them on dimension-many pseudo-random points of $\bV$. If the matrix constructed in this way has full rank, then we have a basis of that space of invariants. If not, we continue selecting random quintuples until a basis is found. 

Finally, the space $U_d$ has an additional action of $\Sigma_5$. 
Assuming that there is a single minimal generator of $\mathcal{I}(X)$ in a given $U_d$, it must be a semi-invariant of $\Sigma_5$, so either an invariant or skew-invariant. Let  $U_d^{\Sigma_5}$ (respectively $U_d^{\Sigma_5, {\rm sgn}}$) denote the subspace of $U_{d}$ of $\Sigma_{5}$-invariants (respectively skew-invariants).
In Figure~\ref{Ud} we list the dimensions of these spaces of invariants for degrees up to $16$. The results follow from standard character theory calculations whose explanation we will omit.
\begin{figure}\label{Ud}
\begin{center}
\begin{tabular}{l|l|l|l}
Degree $d$ & $\dim U_d$ & $\dim U_d^{\Sigma_5}$ & $\dim U_d^{\Sigma_5, {\rm sgn}}$ \\ \hline
2 & 0 & 0 & 0 \\
4 & 5 & 1 & 0\\
6 & 1 & 0 & 1 \\
8 &36 & 4 & 0\\
10 & 15 & 0 & 2\\
12 & 228 & 12 & 2\\
14 & 231 & 2 & 9\\ 
16 & 1313 & 39 & 10
\end{tabular}
\end{center}
\caption{The dimensions of $U_d$, and its subspaces of $\Sigma_5$-invariants and skew-invariants.
}
\end{figure}
We focus our search for equations of $X$ in the space of semi-invariants for  $\SL(2)^{\times 5}\rtimes\Sigma_5$.
We describe this procedure in the case $d=16$ in \S\ref{app:f16}. The following proposition is a summary of what we found:

\begin{propositionst} \label{prop:eqns}
There are minimal equations $f_6, f_{16}$ vanishing on $X$ of degrees $6$ and $16$. Both are invariant under $\SL_2^{\times 5}$. Furthermore, $f_6$ is a skew-invariant under $\Sigma_5$ while $f_{16}$ is a $\Sigma_5$-invariant.
\end{propositionst}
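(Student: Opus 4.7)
The plan is to exploit the large symmetry group $\SL_2^{\times 5} \rtimes \Sigma_5$ of $X$ to confine the search for equations to very small subspaces of $R_d$, as outlined in \S\ref{sec:eqn-gen}. The proof then amounts to producing explicit elements of $U_d^{\Sigma_5}$ or $U_d^{\Sigma_5, \mathrm{sgn}}$ that vanish on the parametrization of $X$ by sums of five decomposable tensors, and to a minimality check that rules out the candidate polynomials lying in the ideal generated by lower-degree equations.

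For $f_6$, the situation is essentially forced by Figure~\ref{Ud}: since $\dim U_6 = \dim U_6^{\Sigma_5, \mathrm{sgn}} = 1$, there is, up to scalar, a unique $\SL_2^{\times 5}$-invariant in degree $6$, and it is automatically $\Sigma_5$-skew-invariant. I would produce a nonzero representative by applying the Young symmetrizer associated to a quintuple of shape-$(3,3)$ tableaux, verify that it is nonzero by evaluation on a single generic point of $\bV$, and then verify vanishing on a fully parametric sum of five decomposable tensors using exact symbolic arithmetic. Since degree $6$ is the lowest possible degree for equations of $\sigma_5$ by prolongation, $f_6$ is automatically a minimal generator of $\mathcal{I}(X)$; this half of the argument is rigorous.

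For $f_{16}$ I would work inside $U_{16}^{\Sigma_5}$, of dimension $39$, in three steps. First, generate candidate invariants in $U_{16}$ by picking quintuples of Young tableaux of shape $(8,8)$, applying the associated Young symmetrizers to the generic tensor, and repeatedly testing linear independence numerically at pseudo-random points of $\bV$ until one obtains $1313 = \dim U_{16}$ independent invariants; then project onto the $\Sigma_5$-isotypic piece by averaging over $\Sigma_5$ and extract a basis of $U_{16}^{\Sigma_5}$. Second, evaluate this $39$-dimensional basis on many pseudo-random rational rank-$5$ points of $X$ produced from the parametrization and compute the kernel of the resulting matrix; the expected outcome is a one-dimensional kernel, spanned by an invariant $f_{16}$. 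Third, certify minimality by checking that $f_{16}$ is not a scalar multiple of any element of $f_6 \cdot U_{10}^{\Sigma_5, \mathrm{sgn}}$: since $R$ is an integral domain, any factorization $f_{16} = f_6 \cdot g$ forces $g$ to be $\SL_2^{\times 5}$-invariant and $\Sigma_5$-skew-invariant by equivariance, so $g$ must lie in the $2$-dimensional space $U_{10}^{\Sigma_5,\mathrm{sgn}}$, making this a direct linear-algebra check inside $U_{16}^{\Sigma_5}$.

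The main obstacle is managing the size of $U_{16}$: constructing an explicit basis of a $1313$-dimensional space of invariants requires a large number of trial quintuples of tableaux and a substantial numerical linear-independence test, and evaluating the resulting basis on enough points of $X$ to reliably pin down the vanishing locus is the dominant computational cost. Because both the basis construction and the vanishing test rely on pseudo-random data, the conclusion about $f_{16}$ is only valid with high probability, which is exactly what the star on the proposition records; by contrast, the argument for $f_6$ can be made fully rigorous.
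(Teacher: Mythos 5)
Your treatment of $f_6$ matches the paper's and is fine. For $f_{16}$ the overall strategy is the paper's, but there is a genuine error in the accounting of your second step: the kernel of the evaluation matrix on $U_{16}^{\Sigma_5}$ cannot be one-dimensional. Since $f_6$ is a $\Sigma_5$-skew-invariant lying in $\cI$, the two-dimensional space $f_6 \cdot U_{10}^{\Sigma_5,\sgn}$ consists of $\Sigma_5$-invariants (a product of two sign-isotypic elements is invariant) contained in $\cI_{16}\cap U_{16}^{\Sigma_5}$, and multiplication by $f_6$ is injective because $R$ is a domain; hence $\dim(\cI \cap U_{16}^{\Sigma_5}) \ge 2$ before any new generator enters the picture. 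Your own third step invokes exactly this two-dimensional space, so the plan is internally inconsistent: if the kernel really were one-dimensional, your minimality check would necessarily fail. What the paper actually finds is a rank-$36$ evaluation matrix, i.e.\ a three-dimensional kernel, and the existence of a single new minimal generator is deduced precisely from the excess $3-2=1$; the generator $f_{16}$ is then any element of that kernel not lying in $f_6 \cdot U_{10}^{\Sigma_5,\sgn}$, so it is only well defined modulo that subspace. In execution your procedure would presumably self-correct (the computed kernel would come out three-dimensional), but as written the expected outcome, and the logic extracting $f_{16}$ from it, are wrong.

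A secondary, practical point: you propose to first build a full basis of the $1313$-dimensional space $U_{16}$ and only afterwards average over $\Sigma_5$. The paper instead symmetrizes first, forming $F_i = \sum_{\sigma\in\Sigma_5}\sigma.Q_i$ from quintuples $Q_i$ of shape-$(8,8)$ tableaux, so that only $39$ independent elements (the dimension of $U_{16}^{\Sigma_5}$) ever need to be produced and tested. Since each single evaluation of such an invariant took on the order of hours and up to $10$GB of memory, your order of operations multiplies the dominant cost by more than an order of magnitude; it is not incorrect, but the feasibility of the whole computation hinges on performing the symmetrization before, not after, the linear-independence search.
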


To clarify: $f_6$ was constructed explicitly and we verified symbolically that it vanishes on $X$ and that it is a skew-invariant. The polynomial $f_{16}$ was also constructed explicitly and verified to be a $\Sigma_5$-invariant, but we only verified that it vanishes on a large collection of pseudo-random points on $X$, and hence it belongs to the ideal of $X$ with high probability.

\subsection{The equation $f_6$} \label{app:f6}
Given a monomial in the $x_I$ (the coordinates on $\bP(\bV)$), define its skew-symmetrization to be $c^{-1} \sum_{\sigma \in \Sigma_5} {\rm sgn}(\sigma) x_{\sigma(I)}$ where $c$ is the coefficient of $x_I$ in the sum. The polynomial $f_6$ has $864$ monomials and  is the sum of the skew-symmetrizations of the following $15$ monomials:
\begin{align*}
-x_{00000} x_{01010} x_{01101} x_{10011} x_{10100} x_{11111} , \quad 
x_{00000} x_{01100} x_{01111} x_{10010} x_{10111} x_{11001}, \\ 
-x_{00000} x_{01100} x_{01111} x_{10011} x_{10110} x_{11001} , \quad
x_{00000} x_{01101} x_{01110} x_{10011} x_{10110} x_{11001}, \\ 
-x_{00110} x_{01000} x_{01101} x_{10000} x_{10011} x_{11111} , \quad
x_{00100} x_{01010} x_{01111} x_{10000} x_{10111} x_{11001}, \\ 
x_{00100} x_{01000} x_{01111} x_{10011} x_{10110} x_{11001} , \quad
x_{00110} x_{01000} x_{01101} x_{10001} x_{10010} x_{11111}, \\ 
-x_{00100} x_{01010} x_{01111} x_{10001} x_{10111} x_{11000} , \quad
x_{00100} x_{01010} x_{01111} x_{10011} x_{10101} x_{11000}, \\ 
-x_{00101} x_{01010} x_{01111} x_{10000} x_{10110} x_{11001} , \quad
x_{00100} x_{01011} x_{01110} x_{10011} x_{10101} x_{11000}, \\ 
-x_{00110} x_{01001} x_{01100} x_{10001} x_{10010} x_{11111} , \quad
x_{00110} x_{01001} x_{01111} x_{10011} x_{10100} x_{11000}, \\ 
x_{00111} x_{01010} x_{01101} x_{10011} x_{10100} x_{11000}.
\end{align*}

There is an alternative description in terms of Young symmetrizers, following the same construction outlined in \cite{BatesOeding}. The  Young symmetrizer algorithm  takes as input  a set of fillings of five Young diagrams, performs a series of skew-symmetrizations and symmetrizations, and produces as output a polynomial in the associated Schur module.
One can search over all possible shapes and fillings of tableaux for a fixed number of boxes and evaluate Young symmetrizers to find modules of equations in $\mathcal I (X)$. We describe this method more fully in the next section.  In degree $6$ the situation is particularly nice. It turns out that there are 5 standard tableaux of shape $(3,3)$ and content $\{1,2,\dots,6\}$ and the following Schur module, which uses one of each of the 5 standard fillings, realizes the non-trivial copy of $\bigotimes_{i=1}^{5}(\bS_{3,3}V_{i})$ inside of $\Sym^{6}(\bV)$:
\ytableausetup{smalltableaux}
\[
\bS_{\ytableaushort{135,246}} V_{1} \otimes
\bS_{\ytableaushort{134,256}} V_{2} \otimes
\bS_{\ytableaushort{125,346}} V_{3} \otimes
\bS_{\ytableaushort{124,356}} V_{4} \otimes
\bS_{\ytableaushort{123,456}} V_{5}.
\]
From this description of the invariant, one can use the classical symbolic method to verify that a general point of the fifth secant variety must be a zero of this invariant (see \cite{RaicuGSS} for complete descriptions of this type of argument, or also \cite{Ottaviani09_Waring}).  
This gives an unconditional proof that $f_{6}$ is in $\mathcal I(X)$.  Moreover, in \S\ref{sec:complement} we explain how this description of $f_{6}$ also provides a generalization to 5th secant varieties of any larger number of Segre products of projective spaces of any dimensions.

\subsection{The equation $f_{16}$} \label{app:f16}

We follow the approach outlined in \S\ref{sec:eqn-gen}. Our search for new equations in degrees $8$, $10$, $12$, and $14$ did not yield any new equations, so we only describe our process in the degree $16$ case. We used {\tt Maple}, and the code for these computations may be found in the ancillary files accompanying the arXiv version of this paper. Using the approach, we conclude that there are no $\Sigma_{5}$ skew-invariants in $\mathcal{I}_{16}$. We found that $\dim(\cI\cap U_{16}^{\Sigma_{5}}) \le 3$ and equals $3$ with high probability.
Note that for $f' \in U_{10}$, we have $f_6 f' \in U_{16}^{\Sigma_5}$ if and only if $f' \in U_{10}^{\Sigma_5, \sgn}$ since $f_6$ is a skew-invariant under $\Sigma_5$. Since $U_{10}^{\Sigma_{5},{\rm sgn}}$ is $2$-dimensional, we know that $f_{6}\cdot U_{10}^{\Sigma_{5}, \sgn}$ is a $2$-dimensional subspace of $U_{16}^{\Sigma_{5}}$. Since $\dim(\cI\cap U_{16}^{\Sigma_{5}}) = 3$ (with high probability), we find that $\cI$ has one additional minimal generator in degree $16$. 

Let us describe in more detail the case of finding $\Sigma_{5}$-invariants, the skew-invariant case is similar. Let $Q_{1},Q_{2},\dots$ denote quintuples of Young tableaux all of shape $(8,8)$. The sum
\[
F_{i}:=\sum_{\sigma\in \Sigma_{5}} \sigma.Q_{i}
\] 
is in $U_{16}^{\Sigma_{5}}$. The Young symmetrizer algorithm (see \cite{BatesOeding}) can be used to evaluate $F_{i}$ on a point of $\bV$ to test if it is non-zero. Each evaluation took between 500 and 23000 seconds and up to approximately 10GB of RAM on our servers\footnote{One with 24 cores 2.8 GHz Intel Xeon processors with 144 GB RAM and another with 40 cores 2.8 GHz Intel Xeon processors with 256GB of RAM.}.

We continued this randomized search for non-zero elements of $U_{16}^{\Sigma_{5}}$ until $39$ linearly independent invariants were found (the list of quintuples of fillings, data points, and code can be found in the ancillary files).  To verify independence, we chose $39$ pseudo-random points $v_{i} \in \bV$ and verified that the matrix $M:= \left(F_{i}(v_{j})\right)$ has full rank. While each evaluation $F_{i}(v_{j})$ is expensive, they are all independent computations. The distributed computation took approximately 2-3 days of computational time to verify the independence of $F_{i}$.

We then took $45$ pseudo-random points $p_{i}\in X$ and computed the matrix $(F_{i}(p_{j}))$  (only $39$ points are strictly necessary, but we included $6$ more to increase the probability that our result is correct).  This computation took an additional 2-3 days to complete. Finally we found that the matrix $\left(F_{i}(p_{j})\right)$ has rank $36$, so there is a $3$-dimensional space of invariants vanishing on $\{p_{j}\}_{j=1}^{45}$ and vanishing on all of $X$ with high probability.

An expression of a basis of the kernel of the transpose of $(F_{i}(p_{j}))$ gives, in turn, a basis of $\mathcal{I}_{16}\cap U_{16}^{\Sigma_{5}}$ via linear combinations of quintuples of symmetrized sums of tableaux. Modding out by the space of invariants generated by $f_{6}$, (in principle) one finds $f_{16}$.

\section{Proof of Theorem~\ref{thm:CI}} \label{sec:proof}

\begin{lemma} \label{lem:codim2}
$X$ has codimension $2$.
\end{lemma}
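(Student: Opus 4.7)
The plan is to establish $\dim X = 29$ by proving the matching inequalities separately. For the upper bound, note that $\mathbb{P}(\mathbf{V})$ has dimension $31$, the Segre variety $Y = \prod_{i=1}^{5}\mathbb{P}(V_i)$ has dimension $5$, and $X = \sigma_5(Y)$ is the image of a rational map from the abstract join of five copies of $Y$, whose domain has dimension $5 \cdot 5 + 4 = 29$. Hence $\dim X \leq 29$, which already gives $\mathrm{codim}(X) \geq 2$.

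For the matching lower bound, I would apply Terracini's lemma. If $v_1, \ldots, v_5$ are general points on the affine cone $\hat Y$ over $Y$, then the affine tangent space to $\hat X$ at $v_1 + \cdots + v_5$ is
\[
\hat T_{v_1}\hat Y + \cdots + \hat T_{v_5}\hat Y,
\]
each summand being $6$-dimensional inside $\mathbf{V} \cong \bk^{32}$. It therefore suffices to exhibit one explicit choice of rank-one tensors $v_i = w_{i,1} \otimes \cdots \otimes w_{i,5}$ (with $w_{i,j} \in V_j$) for which this sum has dimension $30$. A basis of $\hat T_{v_i}\hat Y$ is given by the six tensors obtained from $v_i$ by replacing a single factor $w_{i,j}$ by another vector in $V_j$, so assembling these $30$ vectors as the columns of a $32 \times 30$ matrix with rational entries and verifying (say by Gaussian elimination) that its rank equals $30$ is a rigorous, finite computation. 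By upper-semicontinuity of rank, a general tangent-space sum is then also $30$-dimensional, so $\dim X = 29$ and $\mathrm{codim}(X) = 2$.

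The main obstacle is the logical possibility that every such Terracini computation fails, i.e., that $\sigma_5((\mathbb{P}^1)^{\times 5})$ is defective. This does not happen: non-defectivity in this range is classical for secant varieties of Segre products of $\mathbb{P}^1$'s, and it is already implicit in the generic $2$-identifiability statement of Bocci--Chiantini cited in the introduction, since finite fibers of the secant map force the expected dimension. A single Terracini check, carried out rigorously over $\mathbb{Q}$, therefore settles the lemma without any probabilistic input, which is consistent with the fact that this statement is not marked with a star.
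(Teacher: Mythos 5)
Your argument is correct, but it reaches the conclusion by a longer road than the paper, whose entire proof is a one-line citation of [CGG, Theorem~4.1]: secant varieties of $\mathbb{P}^1\times\cdots\times\mathbb{P}^1$ are non-defective for $n\ge 5$ factors, so $\dim X=\min(5\cdot5+4,31)=29$ and the codimension is $2$. Your proposed Terracini verification --- five explicit rational points on the Segre cone, a $32\times 30$ matrix of tangent vectors, exact Gaussian elimination, then semicontinuity of rank --- would give a rigorous, self-contained proof, and it is in fact the standard mechanism by which such non-defectivity results are proved. However, you do not actually exhibit the points or carry out the rank computation, so as written your proof is completed only by the final paragraph, i.e., by citing known non-defectivity (which is precisely the CGG theorem the paper cites) or by deducing it from the Bocci--Chiantini identifiability statement; that deduction is valid, since the abstract secant variety has dimension $29$ and finite generic fibers over $X$ force $\dim X=29$. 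In substance, then, your proof and the paper's rest on the same appeal to the literature, with the Terracini computation serving as a viable but unexecuted alternative that would remove the dependence on outside results. One small correction: a basis of $\hat T_{v_i}\hat Y$ consists of $v_i$ itself together with the five tensors obtained by replacing a single slot $w_{i,j}$ by a vector independent of $w_{i,j}$, so it is not quite ``six tensors obtained by replacing a single factor''; the $32\times 30$ matrix you describe is nevertheless the right object, and the upper bound $\dim X\le 29$ is already implicit in your parameter count since the image of a $29$-dimensional variety cannot have larger dimension.
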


\begin{proof}
This follows from \cite[Theorem 4.1]{CGG}.
\end{proof}

\begin{lemmast} \label{lem:bertini}
$\deg(X) \ge 96$.
\end{lemmast}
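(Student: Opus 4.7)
The plan is to compute a lower bound on $\deg(X)$ numerically, using \texttt{Bertini} to count the points of $X \cap L$ for a generic $L \cong \bP^{2} \subset \bP(\bV) = \bP^{31}$ (of complementary dimension by Lemma~\ref{lem:codim2}). The computation rests on the natural parametrization
\[
\phi\colon \bigl(V_1 \times \cdots \times V_5\bigr)^{\times 5} \longrightarrow \bV,\qquad
(v_{j,1},\dots, v_{j,5})_{j=1}^{5} \longmapsto \sum_{j=1}^{5} v_{j,1}\otimes \cdots \otimes v_{j,5},
\]
whose image closure is the affine cone over $X$. Choosing $29$ random linear forms $\ell_{1},\dots,\ell_{29}$ on $\bV$ to cut out $L$, and adding enough affine normalization slices to kill the $20$-dimensional continuous gauge of $\phi$ (the $(\bk^\ast)^{4}$ acting within each of the five rank-one summands by rescaling the $v_{j,i}$), I would obtain a square $0$-dimensional polynomial system $\ell_{i}(\phi(y)) = 0$ on the normalized parameter space whose solutions map onto $X \cap L$.

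Next I would hand this system to \texttt{Bertini} under a multi-homogeneous (or total-degree) homotopy, endgame every path at high precision, and cluster the numerical endpoints by their image in $\bP(\bV)$. The remaining redundancy of $\phi$ is purely discrete: the $\Sigma_{5}$ permutation of the five rank-one summands, and (by the identifiability result of Bocci and Chiantini~\cite{BC}) exactly two distinct rank-$5$ decompositions per generic point of $X$. Hence each image point of $X \cap L$ is hit by exactly $2 \cdot 5! = 240$ paths, and the number of distinct image clusters equals the number of paths divided by $240$. A count of at least $96$ such clusters yields the lemma; based on the degrees of $f_6$ and $f_{16}$ in Proposition~\ref{prop:eqns}, one in fact expects exactly $96 = 6 \cdot 16$ clusters (that is, $23040$ paths) from the Bezout bound for the anticipated complete intersection.

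The main obstacle is numerical. Since the conclusion is a lower bound, the dangerous failure mode is that genuinely distinct intersection points collapse into a single cluster under finite precision, under-counting the degree; missed paths during tracking would also reduce the count, so the start system must be chosen carefully enough that no path is lost. Standard mitigations---adaptive precision, \texttt{Bertini}'s endgame sharpening, repeating the calculation with several independent random slices $L$, and monodromy loops around $L$ to confirm that the witness-set cardinality is stable---are precisely what justifies the ``high probability'' star on the statement.
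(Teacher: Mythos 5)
Your approach is the same as the paper's: slice $X$ with a random $\bP^2$, set up the polynomial system via the rank-five parametrization, and count the intersection points with \texttt{Bertini}'s homotopy continuation; the paper's proof is in fact far terser and leaves all of the gauge-fixing and path-counting bookkeeping implicit, so your extra detail (the $20$-dimensional continuous stabilizer, the $2\cdot 5!$ discrete fiber from Bocci--Chiantini, the expected Bezout count $96 = 6\cdot 16$) is consistent with what the computation must actually do. One correction, though: your risk analysis at the end is inverted. Since the lemma is a \emph{lower} bound established by exhibiting points of $X\cap L$, under-counting (collapsed clusters, lost paths) is harmless --- it merely weakens the bound you can claim. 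The dangerous failure mode is \emph{over}-counting: numerical endpoints that do not actually correspond to points of $X$, or two floating-point approximations of the same exact point being reported as distinct, would make the claim ``$\deg(X)\ge 96$'' false. This is precisely the ``false positive'' the paper warns about, and it is what the star on the lemma is guarding against; certified distinctness and membership (e.g., via alpha-theory or exact arithmetic refinement) would be what is needed to remove it.
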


\begin{proof} 
Computing the degree of an algebraic variety is a basic function of the software {\tt Bertini} \cite{BertiniSoftware}, see the ancillary files and \cite{BertiniBook}. We thank Jon Hauenstein for his help with this computation. 
The idea is to intersect $X$ with a randomly chosen $\bP^2$ and find $96$ points. One of the basic methods of numerical algebraic geometry (and {\tt Bertini}, in particular) is numerical homotopy continuation, which relies on high precision numerical path tracking to follow the paths traced out by a homotopy from a system of polynomial equations with known roots to the desired system.  For more details on these types of computations applied to tensor decomposition, see \cite{DHO, HOOS}.

The points found are represented by floating point numbers, so only satisfy the equations approximately. Though these methods have been rigorously tested through countless examples, are open source and repeatable, there is still a chance that the computations yield a false positive result, so we only claim that the result holds with high probability, so the proof of the lemma may be read as evidence for the statement.
\end{proof}

\begin{proof}[Proof of Theorem~\ref{thm:CI}]
Using the notation from Proposition~\ref{prop:eqns}, let $Y = V(f_6, f_{16})$, which is a complete intersection. We have $X \subseteq Y$ and $\deg X  \ge \deg Y= 96$.
 Since $X$ is irreducible of codimension $2$ (Lemma~\ref{lem:codim2}), and $Y$ is equidimensional, $Y$ is also irreducible (otherwise the degree inequality would be violated). So $X$ is the reduced subscheme of $Y$. Also, this implies that they have the same degree, so $Y$ is generically reduced. Since $Y$ is Cohen--Macaulay, generically reduced is equivalent to reduced. Hence $X=Y$ is a complete intersection.
\end{proof}

\section{Generalizations via flattening and inheritance} \label{sec:complement}

As mentioned in the introduction, the equations $f_{6}$ and $f_{16}$ provide modules of equations for all other 5th secant varieties of Segre products, both when the dimensions of the factors increase (by ``inheritance'') and when the number of factors increase (by flattening). Inheritance for secant varieties was introduced in \cite{LM04}, and was reinvestigated many times since. Inheritance can be encoded in the formality of  $\Delta$-modules \cite{delta-mod}; we hope that our description here provides an entry point to this formalism for the unfamiliar reader.

The assignment $(V_1, \dots, V_5) \mapsto \Sym(V_1 \otimes \cdots \otimes V_5)$ is a multivariate polynomial functor $R$ on $5$-tuples of vector spaces, as is the assignment of the coordinate ring of the $5$th secant variety of the corresponding Segre product. Similarly, there is a functor $T_{i,j}$ defined by 
\[
T_{i,j}(V_1, \dots, V_5) = {\rm Tor}_i^{\Sym(V_1 \otimes \cdots \otimes V_5)}(R(V_1, \dots, V_5), \bk)_j
\]
(the Tor groups are ${\bf Z}$-graded, and the subscript denotes the $j$th homogeneous piece).
When $i=1$, this is the space of minimal generators in degree $j$ of the ideal of $R(V_1, \dots, V_5)$. 

As a representation of $\GL_2$, a $\SL_2$-invariant of degree $2n$ is the Schur functor $\bS_{n,n}$. So Proposition~\ref{prop:eqns} can be interpreted as saying that $\bS_{3,3} \boxtimes \cdots \boxtimes \bS_{3,3}$ appears with multiplicity $1$ in $T_{1,6}$ (coming from $f_6$) and $\bS_{8,8} \boxtimes \cdots \boxtimes \bS_{8,8}$ appears with multiplicity $1$ in $T_{1,16}$ (coming from $f_{16}$). The Koszul relation amongst $f_6$ and $f_{16}$ also shows that $\bS_{11,11} \boxtimes \cdots \boxtimes \bS_{11,11}$ appears with multiplicity $1$ in $T_{2,22}$. Furthermore, if any other product of Schur functors $\bS_{\lambda^1} \boxtimes \cdots \boxtimes \bS_{\lambda^5}$ appears in any $T_{i,j}$ then $\ell(\lambda^k) > 2$ for some $k$ since it vanishes when we evaluate on $(\bk^2, \dots, \bk^2)$.

There is another interpretation of our results using $\Delta$-modules (see \cite{delta-mod}): the $5$th secant variety of the Segre product of projective spaces is a $\Delta$-variety, and hence the assignment of a tuple of vector spaces to the space of degree $d$ equations vanishing on the $5$th secant variety is a finitely generated $\Delta$-module (which implies finitely presented using \cite[Theorem 9.1.3]{grobner}). For $d=6$, we have shown that $\bS_{3,3} \boxtimes \cdots \boxtimes \bS_{3,3}$ ($5$ copies) are minimal generators of this $\Delta$-module, and similarly for $d=16$ and $\bS_{8,8} \boxtimes \cdots \boxtimes \bS_{8,8}$.

\end{document}